\newtheorem{theorem}{Theorem}[section]
\newtheorem{lemma}[theorem]{Lemma}
\newtheorem{corollary}[theorem]{Corollary}
\theoremstyle{definition}
\newtheorem{definition}[theorem]{Definition}
\newtheorem{remark}[theorem]{Remark}
\begin{document}
\setcounter{page}{1}

\title[ On Some Properties of the Trigamma Function ]{On Some Properties of the Trigamma Function }

\author[K. Nantomah, G. Abe-I-Kpeng and S. Sandow]{Kwara Nantomah$^{1,*}$, Gregory Abe-I-Kpeng$^2$ and Sunday Sandow$^3$}

\address{$^1$Department of Mathematics, School of Mathematical Sciences, C. K. Tedam University of Technology and Applied Sciences, P. O. Box 24, Navrongo, Upper-East Region, Ghana. }
\email{\textcolor[rgb]{0.00,0.00,0.84}{ knantomah@cktutas.edu.gh}}

\address{$^2$Department of Industrial Mathematics, School of Mathematical Sciences, C. K. Tedam University of Technology and Applied Sciences, P. O. Box 24, Navrongo, Upper-East Region, Ghana. }
\email{\textcolor[rgb]{0.00,0.00,0.84}{ gabeikpeng@cktutas.edu.gh}}

\address{$^3$Department of Mathematics, School of Mathematical Sciences, C. K. Tedam University of Technology and Applied Sciences, P. O. Box 24, Navrongo, Upper-East Region, Ghana. }
\email{\textcolor[rgb]{0.00,0.00,0.84}{ ssandow@cktutas.edu.gh}}


\subjclass[2010]{33B15, 26A48, 26A51, 41A29}

\keywords{Gamma function; digamma function; trigamma function; harmonic mean inequality}

\date{Received: xxxxxx; Revised: xxxxxx; Accepted: xxxxxx.
\newline \indent $^{*}$ Corresponding author}

\begin{abstract}
In 1974, Gautschi proved an intriguing inequality involving the gamma function $\Gamma$. Precisely, he proved that, for $z>0$, the harmonic mean of $\Gamma(z)$ and $\Gamma(1/z)$ can never be less than 1. In 2017, Alzer and Jameson extended this result to the digamma function $\psi$ by proving that,  for $z>0$, the harmonic mean of $\psi(z)$ and $\psi(1/z)$ can never be less than $-\gamma$ where $\gamma$ is the Euler-Mascheroni constant. In this paper, our goal is to extend the results to the trigamma function $\psi'$. We prove among other things that, for $z>0$, the harmonic mean of $\psi'(z)$ and $\psi'(1/z)$ can never be greater than $\pi^2/6$.
\end{abstract} \maketitle


\section{Introduction}

The classical gamma function which is an extension of the factorial function is frequently defined  as 
\begin{equation*} \label{eqn:Digamma-Integral-Rep}
\Gamma(z)=\int_{0}^{\infty}r^{z-1}e^{-r}dr
\end{equation*}
for $z>0$.
Closely connected to the gamma function is the the digamma (or psi) function is which is defined as
\begin{align}
\psi(z)=\frac{d}{dz}\ln \Gamma(z)&= -\gamma + \int_{0}^{\infty} \frac{e^{-r} - e^{-zr}}{1-e^{-r}}dr,   \\
&=  \int_{0}^{\infty} \left( \frac{e^{-r}}{r} - \frac{ e^{-zr}}{1-e^{-r}} \right)dr,  \\
&= -\gamma - \frac{1}{z} + \sum_{n=1}^{\infty}\frac{z}{n(n+z)},
\end{align}
where $\gamma$ is the Euler-Mascheroni constant. Derivatives of the digamma function which are called polygamma functions are defined as 
\begin{align}
\psi^{(c)}(z)
&=  (-1)^{c+1}\int_{0}^{\infty} \frac{r^{c}e^{-zr}}{1-e^{-r}}dr,  \label{eqn:Polygamma-Integral-Rep} \\
& = (-1)^{c+1} \sum_{n=0}^{\infty}\frac{c!}{(n+z)^{c+1}},  \label{eqn:Polygamma-Series-Rep}
\end{align}
for $z>0$ and $c\in \mathbb{N}$. The particular case $\psi'(z)$ is what is referred to as the trigamma function. Also, it is well known in the literature that the integral
\begin{equation}\label{eqn:integral-rep-vip}
\frac{c!}{z^{c+1}} = \int_{0}^{\infty}r^{c}e^{-zr}\,dr
\end{equation}
holds for $z>0$ and $c\in \mathbb{N}_0$.

In 1974, Gautschi \cite{Gautschi-1974-SJMA} presented an elegant inequality involving the gamma function. Precisely, he proved that, for $z>0$, the harmonic mean of $\Gamma(z)$ and $\Gamma(1/z)$ is at least 1. That is,
\begin{equation}\label{eqn:Gautschi-Ineq-Gamma}
\frac{2\Gamma(z)\Gamma(1/z)}{\Gamma(z)+\Gamma(1/z)}\geq1,  
\end{equation}
for $z>0$ and with equality when $z=1$. As a direct consequence of \eqref{eqn:Gautschi-Ineq-Gamma}, the inequalities
\begin{equation}\label{eqn:Gautschi-Ineq-Gamma-Impl-1}
\Gamma(z) + \Gamma(1/z) \geq 2
\end{equation}
and 
\begin{equation}\label{eqn:Gautschi-Ineq-Gamma-Impl-2}
\Gamma(z)\Gamma(1/z) \geq 1
\end{equation} 
are obtained for $z>0$. Attributing to the importance of this  inequality, some refinements and extensions have been investigated \cite{Alzer-1997-JCAM, Alzer-1999-PAMS, Alzer-2002-PEMS, Alzer-2003-JCAM, Alzer-2006-JCAM, Alzer-2008-NA, Giordano-Laforgia-2001-JCAM, Jameson-Jameson-2012-JMI}.

In 2017, Alzer and Jameson \cite{Alzer-Jameson-2017-RSMUP} established a striking companion of \eqref{eqn:Gautschi-Ineq-Gamma} which involves the digamma function $\psi(z)$. 
They established that the inequality 
\begin{equation}\label{eqn:Gautschi-Ineq-Digamma}
\frac{2\psi(z)\psi(1/z)}{\psi(z)+\psi(1/z)} \geq -\gamma
\end{equation}
holds for $z>0$ and with equality when $z=1$. Thereafter, Alzer \cite{Alzer-2017-RSMUPT}  refined \eqref{eqn:Gautschi-Ineq-Digamma} by proving that
\begin{equation}\label{eqn:Gautschi-Ineq-Digamma-Refinement}
\frac{2\psi(z)\psi(1/z)}{\psi(z)+\psi(1/z)} \geq -\gamma \frac{2z}{z^2+1} 
\end{equation}
holds for $z>0$ and with equality when $z=1$.

In 2018, Yin et al. \cite{Yin-Etal-2018-ADE} extended inequality \eqref{eqn:Gautschi-Ineq-Digamma} to the $k$-analogue of the digamma function by establishing that
\begin{equation}\label{eqn:Gautschi-Ineq-k-Digamma-Yin}
\frac{2\psi_{k}(z)\psi_{k}(1/z)}{\psi_{k}(z)+\psi_{k}(1/z)} \geq \frac{\ln^2k+\gamma^2-2(\gamma+1)\ln k}{k\left[\ln k+\psi(1/k) \right]}
\end{equation}
for $z>0$ and $\frac{1}{\sqrt[3]{3}}\leq k \leq 1$.

In 2020, Yildirim \cite{Yildirim-2020-JMI} improved on the inequality \eqref{eqn:Gautschi-Ineq-k-Digamma-Yin} by establishing that
\begin{equation}\label{eqn:Gautschi-Ineq-k-Digamma-Yildirim}
\frac{2\psi_{k}(z)\psi_{k}(1/z)}{\psi_{k}(z)+\psi_{k}(1/z)} \geq \psi_{k}(1)
\end{equation}
for $z>0$ and $k>0$. When $k=1$, inequalities \eqref{eqn:Gautschi-Ineq-k-Digamma-Yin}  and \eqref{eqn:Gautschi-Ineq-k-Digamma-Yildirim} both return to inequality \eqref{eqn:Gautschi-Ineq-Digamma}.

In 2021, Bouali \cite{Bouali-2021-FILOMAT} extended inequalities \eqref{eqn:Gautschi-Ineq-Gamma} and \eqref{eqn:Gautschi-Ineq-Digamma} to the $q$-analogues of the gamma and digamma functions by proving that
\begin{equation}\label{eqn:HMI-q-Gamma}
\frac{2\Gamma_{q}(z)\Gamma_{q}(1/z)}{\Gamma_{q}(z)+\Gamma_{q}(1/z)} \geq1
\end{equation}
for $z>0$ and
\begin{equation}\label{eqn:HMI-q-Digamma}
\frac{2\psi_{q}(z)\psi_{q}(1/z)}{\psi_{q}(z)+\psi_{q}(1/z)} \geq \psi_{q}(1)
\end{equation}
for $z>0$ and $q\in(0,p_0)$, where $p_0\simeq 3.239945$. 

For similar results involving other special functions, one may refer to the works \cite{Matejicka-2019-PAIS, Nantomah-2019-BIMVI, Nantomah-2020-EJMS, Nantomah-2021-IJAM, Nantomah-2021-AMSJ, Nantomah-2023-AJMS}.

In the present investigation, our goal is to extend the results of Alzer and Jameson \cite{Alzer-Jameson-2017-RSMUP} to the trigamma function $\psi'$. Specifically, we prove among other things that, for $z>0$, the harmonic mean of $\psi'(z)$ and $\psi'(1/z)$ can never be greater than $\pi^2/6$. We present our results in Section \ref{sec:Sec-Two}. In order to establish our results, we require the following preliminary definitions and lemmas.

\begin{definition}[\cite{Niculescu-2000-MIA}]\label{def:GG-Convex-Funct}
A function $H:\mathcal{I}\subseteq \mathbb{R}^+ \rightarrow \mathbb{R}$ is referred to as GG-convex if 
\begin{equation}\label{eqn:GG-Convex-Funct}
H(x^{1-k}y^{k}) \leq H(x)^{1-k} H(y)^{k}
\end{equation}
for all $x,y\in \mathcal{I}$ and $k\in[0,1]$. If the inequality in \eqref{eqn:GG-Convex-Funct} is reversed, then $H$ is said to be GG-concave.
\end{definition}

\begin{definition}[\cite{Niculescu-2000-MIA}]\label{def:GA-Convex-Funct}
A function $H:\mathcal{I}\subseteq \mathbb{R}^+ \rightarrow \mathbb{R}$ is referred to as GA-convex if 
\begin{equation}\label{eqn:GA-Convex-Funct}
H(x^{1-k}y^{k}) \leq (1-k)H(x) + kH(y)
\end{equation}
for all $x,y\in \mathcal{I}$ and $k\in[0,1]$. If the inequality in \eqref{eqn:GA-Convex-Funct} is reversed, then $H$ is said to be GA-concave.
\end{definition}

\begin{lemma}[\cite{Niculescu-2000-MIA}]\label{lem:Condition-GG-Convex}
A function $H:\mathcal{I}\subseteq \mathbb{R}^+ \rightarrow \mathbb{R}$ is GG-convex (or GG-concave) if and only if $\frac{zH'(z)}{H(z)}$ is increasing (or decreasing) on $\mathcal{I}$ respectively.
\end{lemma}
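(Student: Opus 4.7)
The plan is to reduce GG-convexity to ordinary convexity through a logarithmic change of variable on both axes. Define $f(t) = \ln H(e^t)$ on the set $\mathcal{J} = \{\ln z : z \in \mathcal{I}\}$; this requires the implicit assumption $H>0$ on $\mathcal{I}$, which is what makes expressions like $H(x)^{1-k}H(y)^{k}$ well-defined with real values. Setting $x=e^{a}$ and $y=e^{b}$, the inequality \eqref{eqn:GG-Convex-Funct} becomes, after taking logarithms,
\begin{equation*}
f((1-k)a + kb) \leq (1-k)f(a) + k f(b),
\end{equation*}
so GG-convexity of $H$ on $\mathcal{I}$ is equivalent to ordinary convexity of $f$ on $\mathcal{J}$. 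The reverse inequality gives the GG-concave case.

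Next, I would invoke the standard differentiable characterisation: a differentiable function $f$ on an interval is convex if and only if $f'$ is non-decreasing. A direct computation using the chain rule gives
\begin{equation*}
f'(t) = \frac{e^{t} H'(e^{t})}{H(e^{t})} = \frac{z H'(z)}{H(z)}, \qquad z = e^{t}.
\end{equation*}
Since $t\mapsto e^{t}$ is a strictly increasing bijection between $\mathcal{J}$ and $\mathcal{I}$, $f'(t)$ is non-decreasing in $t$ if and only if $zH'(z)/H(z)$ is non-decreasing in $z$, which proves the GG-convex direction; the concave direction follows by replacing ``non-decreasing'' with ``non-increasing''.

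There is no real obstacle beyond book-keeping: the one point that merits care is the standing positivity assumption $H>0$, which is needed both for the logarithm in the change of variables and for the quotient $zH'(z)/H(z)$ to be the correct derivative of $\ln H(e^{t})$. Differentiability of $H$ is implicit in the statement through the presence of $H'$, so the equivalence ``$f$ convex $\iff f'$ monotone'' applies verbatim without further smoothness hypotheses.
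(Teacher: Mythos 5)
The paper does not prove this lemma at all: it is imported verbatim from Niculescu's article \cite{Niculescu-2000-MIA} and used as a black box (its only role here is to reduce Theorem \ref{lem:GG-Convex-Trigamma} to the monotonicity of $z\psi''(z)/\psi'(z)$). So there is no in-paper argument to compare against; what you have written is a self-contained proof of the cited result. Your argument is correct and is the standard one: the substitution $f(t)=\ln H(e^t)$ turns GG-convexity into ordinary convexity of $f$ on $\mathcal{J}=\ln\mathcal{I}$, and the chain rule identifies $f'(t)$ with $zH'(z)/H(z)$ under the increasing bijection $z=e^t$. You are also right to flag the two hypotheses that the lemma's statement leaves implicit: positivity of $H$ (without which $H(x)^{1-k}H(y)^k$ and $\ln H$ are not defined --- satisfied in the application since $\psi'>0$) and differentiability of $H$ so that ``convex $\iff$ derivative non-decreasing'' applies. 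One small point of care: for that equivalence you need $\mathcal{J}$ to be an interval, i.e.\ $\mathcal{I}$ must be a multiplicatively convex set rather than an arbitrary subset of $\mathbb{R}^+$; this is forced anyway by Definition \ref{def:GG-Convex-Funct}, which requires $x^{1-k}y^k\in\mathcal{I}$, and is harmless here since $\mathcal{I}=(0,\infty)$. Nothing further is needed.
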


\begin{lemma}[\cite{Zhang-Chu-Zhang-2010-JIA}]\label{lem:Condition-GA-Convex}
A function $H:\mathcal{I}\subseteq \mathbb{R}^+ \rightarrow \mathbb{R}$ is GA-convex if and only if
\begin{equation}\label{eqn:Condition-GA-Convex}
H'(z) + z H''(z) \geq 0
\end{equation}
for all $z\in \mathcal{I}$. The function $H$ is said to be GA-concave if and only if  the inequality in \eqref{eqn:Condition-GA-Convex} is reversed.
\end{lemma}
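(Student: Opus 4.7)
The plan is to reduce GA-convexity of $H$ on $\mathcal{I}\subseteq \mathbb{R}^+$ to ordinary convexity (in the usual sense) of a suitably transformed function of a real variable, and then invoke the classical second-derivative characterization of convex functions. First I would make the substitution $z=e^{t}$ on the domain and set $g(t)=H(e^{t})$ on the log-transformed interval $\{t\in\mathbb{R}:e^{t}\in\mathcal{I}\}$. Writing $x=e^{s}$ and $y=e^{u}$ for arbitrary points of $\mathcal{I}$, the weighted geometric mean becomes $x^{1-k}y^{k}=e^{(1-k)s+ku}$, so the defining inequality \eqref{eqn:GA-Convex-Funct} translates directly into
\[
g\bigl((1-k)s+ku\bigr)\leq (1-k)\,g(s)+k\,g(u),
\]
which is precisely ordinary convexity of $g$. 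Hence $H$ is GA-convex on $\mathcal{I}$ if and only if $g$ is convex on $\log\mathcal{I}$.

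The second step is to convert this into a pointwise condition on $H$. Under the (mild) assumption that $H$ is $C^{2}$, a direct calculation gives
\[
g'(t)=H'(e^{t})\,e^{t}=z\,H'(z),\qquad g''(t)=z\,H'(z)+z^{2}\,H''(z)=z\bigl(H'(z)+zH''(z)\bigr),
\]
with $z=e^{t}>0$. Invoking the standard fact that a twice-differentiable function of one real variable is convex if and only if its second derivative is nonnegative throughout its domain, and using that the factor $z$ is strictly positive, the condition $g''(t)\geq 0$ is equivalent to $H'(z)+zH''(z)\geq 0$, which is exactly \eqref{eqn:Condition-GA-Convex}. The GA-concave case is handled symmetrically by reversing every inequality.

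The only real subtlety is one of regularity: the derivation presupposes that $H$ is twice differentiable, whereas the definition of GA-convexity is purely functional. For the trigamma function $\psi'$ and its associates that the paper actually needs to apply the lemma to, this hypothesis is automatic since they are real-analytic on $(0,\infty)$, so the argument goes through without modification. Beyond that mild caveat, the proof is essentially a change of variable plus the classical convexity test, and I would not expect any genuinely difficult step.
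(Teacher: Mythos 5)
Your proof is correct: the substitution $z=e^{t}$ turns GA-convexity of $H$ into ordinary convexity of $g(t)=H(e^{t})$, and the computation $g''(t)=z\bigl(H'(z)+zH''(z)\bigr)$ with $z=e^{t}>0$ gives exactly the stated criterion. Note that the paper offers no proof of this lemma at all --- it is quoted from the reference \cite{Zhang-Chu-Zhang-2010-JIA} --- and your argument is the standard one used there; your remark about the implicit $C^{2}$ hypothesis is apt, since the statement already presupposes the existence of $H''$, and the functions to which the paper applies the lemma are real-analytic.
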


The following lemma is well known in the literature as the convolution theorem for Laplace transforms.

\begin{lemma}\label{lem:Lap-Tran-of-Conv}
Let $f(r)$ and $g(r)$ be any two functions with convolution $f\ast g= \int_{0}^{r}f(r-s)g(s)\,ds$. Then the Laplace transform of the convolution is given as
\begin{equation*}
 \mathcal{L}\left \{ f\ast g\right \}= \mathcal{L}\left \{ f\right \} \mathcal{L}\left \{g\right \} .
\end{equation*}
In other words,
\begin{equation}\label{eqn:Lap-Tran-of-Conv}
\int_{0}^{\infty}\left[\int_{0}^{r}f(r-s)g(s)\,ds \right]e^{-zr}\,dr =
\int_{0}^{\infty}f(r)e^{-zr}\,dr \int_{0}^{\infty}g(r)e^{-zr}\,dr .
\end{equation}
\end{lemma}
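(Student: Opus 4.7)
The plan is to reduce the double integral on the left-hand side of \eqref{eqn:Lap-Tran-of-Conv} to a product of single integrals by a combination of Fubini's theorem and a single change of variable. First I would interchange the order of integration: the region $\{(r,s):0\le s\le r<\infty\}$ is identical to $\{(r,s):0\le s<\infty,\; s\le r<\infty\}$, so the left-hand side rewrites as $\int_{0}^{\infty}\!\int_{s}^{\infty} f(r-s)g(s)e^{-zr}\,dr\,ds$, with $r$ now the inner variable.

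Next, in the inner integral I would substitute $u=r-s$, giving $du=dr$ and shifting the lower limit $r=s$ to $u=0$. The exponential factorises as $e^{-zr}=e^{-zu}e^{-zs}$, so the integrand separates cleanly into a product of a function of $u$ alone and a function of $s$ alone. The iterated integral then collapses to $\bigl(\int_{0}^{\infty} f(u)e^{-zu}\,du\bigr)\bigl(\int_{0}^{\infty} g(s)e^{-zs}\,ds\bigr)$, which is precisely the right-hand side of \eqref{eqn:Lap-Tran-of-Conv}.

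The only delicate point is the justification of the Fubini interchange. This requires a suitable integrability hypothesis on $f$ and $g$, typically that $\int_{0}^{\infty}|f(r)|e^{-zr}\,dr$ and $\int_{0}^{\infty}|g(r)|e^{-zr}\,dr$ be finite at the $z$ of interest. Since the paper will invoke the lemma only with explicit nonnegative integrands arising from representations like \eqref{eqn:Polygamma-Integral-Rep}, Tonelli's theorem applies directly and there is no real technical obstacle. I therefore do not expect any step of this argument to be the \emph{main} difficulty; the identity is a classical convolution formula and the proof is essentially mechanical once the correct change of variable is chosen.
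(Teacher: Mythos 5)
Your argument is correct: the paper states this lemma without any proof, merely citing it as the well-known convolution theorem for Laplace transforms, and your Fubini-plus-substitution derivation is the standard textbook justification. Your remark that Tonelli suffices in the paper's actual application (where the lemma is invoked with the nonnegative integrands $f(r)=1$ and $g(s)=s^{2}/(1-e^{-s})$ in the proof of the GA-convexity of $\psi'$) correctly identifies the only point needing care.
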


\begin{lemma}[\cite{Pinelis-2002-JIPAM}]\label{lem:LMR}
Let $-\infty \leq u<v \leq \infty$ and $p$ and $q$ be continuous functions that are differentiable on $(u,v)$, with $p(u+)=q(u+)=0$ or $p(v-)=q(v-)=0$. Suppose that $q(z)$ and $q'(z)$ are nonzero for all $z\in(u,v)$. If $\frac{p'(z)}{q'(z)}$ is increasing (or decreasing) on $(u,v)$, then $\frac{p(x)}{q(x)}$ is also  increasing (or decreasing) on $(u,v)$.
\end{lemma}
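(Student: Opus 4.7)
The plan is to reduce Lemma \ref{lem:LMR} to Cauchy's Mean Value Theorem together with a short sign computation. I will establish only the case where $p(u+) = q(u+) = 0$ and $p'/q'$ is increasing; the remaining three variants of the statement follow by elementary symmetries (replacing $(p,q)$ by $(-p,-q)$ to convert between the increasing and decreasing conclusions, and an analogous mirror-image argument at the other endpoint).

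First, since $q'$ is continuous and nowhere zero on $(u,v)$, it has constant sign; after replacing the pair $(p,q)$ by $(-p,-q)$ if necessary (which leaves both $p/q$ and $p'/q'$ unchanged), I may assume $q' > 0$ throughout the interval. Then $q$ is strictly increasing on $(u,v)$, and combined with $q(u+) = 0$ this forces $q(z) > 0$ for every $z \in (u,v)$.

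Next I would apply Cauchy's Mean Value Theorem on each subinterval $[w,z] \subset (u,v)$ with $w < z$: there exists $c \in (w,z)$ with
\begin{equation*}
\frac{p(z) - p(w)}{q(z) - q(w)} = \frac{p'(c)}{q'(c)} \leq \frac{p'(z)}{q'(z)},
\end{equation*}
the final inequality coming from $c < z$ together with the monotonicity of $p'/q'$. Letting $w \to u+$ and using $p(u+) = q(u+) = 0$ then yields the key pointwise bound $p(z)/q(z) \leq p'(z)/q'(z)$ for every $z \in (u,v)$.

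A direct differentiation now gives
\begin{equation*}
\left(\frac{p(z)}{q(z)}\right)' = \frac{p'(z)q(z) - p(z)q'(z)}{q(z)^2} = \frac{q'(z)}{q(z)}\left(\frac{p'(z)}{q'(z)} - \frac{p(z)}{q(z)}\right) \geq 0,
\end{equation*}
since $q(z) > 0$, $q'(z) > 0$, and the bracketed quantity is nonnegative by the previous step. Hence $p/q$ is increasing on $(u,v)$, as required. The only slightly delicate point is justifying the limit $w \to u+$ when $u = -\infty$, but this is immediate from the assumption that $p$ and $q$ are continuous with zero limits at $u+$; everything else is a mechanical computation, so I do not anticipate any real obstacle.
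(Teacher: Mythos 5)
The paper offers no proof of this lemma at all: it is quoted directly from Pinelis's article, so there is nothing internal to compare against. Your argument is the standard proof of the monotone L'H\^opital rule: normalize $q'>0$ by a sign change that leaves both ratios invariant, deduce $q>0$ on $(u,v)$ from strict monotonicity of $q$ together with $q(u+)=0$, apply Cauchy's mean value theorem on $[w,z]$ and the monotonicity of $p'/q'$ to obtain the pointwise bound $p(z)/q(z)\leq p'(z)/q'(z)$ after letting $w\to u+$, and then read off the sign of $(p/q)'$ from the identity $\left(\frac{p}{q}\right)'=\frac{q'}{q}\left(\frac{p'}{q'}-\frac{p}{q}\right)$. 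This core argument is correct and complete for the case you treat, including the limit $w\to u+$ when $u=-\infty$; the use of Cauchy's theorem is legitimate because $q'\neq 0$ forces $q(z)\neq q(w)$.

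There is one genuine slip, though it is confined to your reduction of the remaining cases: replacing $(p,q)$ by $(-p,-q)$ leaves both $p/q$ and $p'/q'$ unchanged --- exactly the property you exploit when normalizing the sign of $q'$ --- so it cannot convert the increasing conclusion into the decreasing one. The correct symmetry is to replace $p$ alone by $-p$: then $p'/q'$ and $p/q$ both change sign, and the decreasing case follows from the increasing one. Likewise, the mirror-image case $p(v-)=q(v-)=0$ deserves one explicit line: with $q'>0$ one now has $q<0$ on $(u,v)$, and the Cauchy mean value inequality reverses because the intermediate point satisfies $c>z$; the two sign changes cancel in $\frac{q'}{q}\left(\frac{p'}{q'}-\frac{p}{q}\right)$, so the conclusion survives, but as written this is asserted rather than checked.
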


\section{Results}\label{sec:Sec-Two}

\begin{theorem} \label{lem:GG-Convex-Trigamma}
The function $\psi'(z)$ is GG-convex on $(0,\infty)$. In other words,
\begin{equation}\label{eqn:GG-Convex-Trigamma}
\psi'(x^{1-k}y^{k}) \leq \left[\psi'(x)\right]^{1-k}\left[\psi'(y)\right]^{k}
\end{equation}
is satisfied for $x>0$, $y>0$ and $k\in[0,1]$.
\end{theorem}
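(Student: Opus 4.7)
The plan is to apply Lemma \ref{lem:Condition-GG-Convex}, which reduces the claim to showing that $T(z):=z\psi''(z)/\psi'(z)$ is increasing on $(0,\infty)$. Since adding a constant preserves monotonicity, I will prove instead that $T(z)+1$ is increasing.

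First I rewrite $z\psi''(z)+\psi'(z)$ as a single Laplace transform. Starting from the integral representation \eqref{eqn:Polygamma-Integral-Rep} with $c=2$ and integrating $-z\psi''(z)=z\int_{0}^{\infty}\tfrac{r^{2}e^{-zr}}{1-e^{-r}}\,dr$ by parts (the boundary term at $r=0$ vanishes because $r^{2}/(1-e^{-r})\sim r$, and at $r=\infty$ it is killed by $e^{-zr}$), a direct computation yields
\[
z\psi''(z)+\psi'(z)\;=\;-\int_{0}^{\infty} G(r)\,e^{-zr}\,dr,\qquad G(r)=\frac{r\bigl[1-(1+r)e^{-r}\bigr]}{(1-e^{-r})^{2}}.
\]
To see that $G>0$ on $(0,\infty)$, set $\eta(r)=1-(1+r)e^{-r}$; then $\eta(0)=0$ and $\eta'(r)=re^{-r}\ge 0$, so $\eta\ge 0$. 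Hence
\[
T(z)+1\;=\;-\,\frac{\int_{0}^{\infty} G(r)\,e^{-zr}\,dr}{\int_{0}^{\infty}\frac{r}{1-e^{-r}}\,e^{-zr}\,dr}\;=:\;-R(z),
\]
and the task reduces to showing that $R$ is decreasing on $(0,\infty)$.

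The key observation is that the ratio of the integrands collapses to
\[
\frac{G(r)}{r/(1-e^{-r})}\;=\;1-\frac{r}{e^{r}-1}\;=:\;1-\phi(r).
\]
A one-variable calculus argument gives $\phi'(r)=[e^{r}(1-r)-1]/(e^{r}-1)^{2}$; the numerator vanishes at $r=0$ and has derivative $-re^{r}\le 0$, so $\phi'<0$ and $\phi$ is strictly decreasing on $(0,\infty)$. Thus $1-\phi$ is strictly increasing.

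To pass from the monotonicity of the integrand ratio to that of the Laplace-transform ratio $R(z)$, I would use a Chebyshev-type symmetrization. For $z_{1}<z_{2}$, clearing the denominator in $R(z_{1})-R(z_{2})$ and symmetrizing in the dummy variables yields an integrand proportional to
\[
\bigl[(1-\phi(r))-(1-\phi(s))\bigr]\bigl(e^{-z_{1}r-z_{2}s}-e^{-z_{2}r-z_{1}s}\bigr),
\]
multiplied by the positive weight $\tfrac{r}{1-e^{-r}}\cdot\tfrac{s}{1-e^{-s}}$. Both bracketed factors are positive on $\{r>s\}$ and negative on $\{r<s\}$, making the product non-negative throughout, so $R(z_{1})\ge R(z_{2})$. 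I expect this final Chebyshev-type step to be the main obstacle; the preceding steps (the integration by parts with its positivity check, and the elementary monotonicity of $\phi$) are essentially mechanical.
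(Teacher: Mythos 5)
Your proposal is correct, and its opening reduction is the same as the paper's: both invoke Lemma \ref{lem:Condition-GG-Convex} to reduce GG-convexity of $\psi'$ to the claim that $z\psi''(z)/\psi'(z)$ is increasing on $(0,\infty)$. The difference lies in how that claim is settled: the paper disposes of it with a one-line citation to Lemma 2 of \cite{Alzer-2017-RSMUPT}, whereas you prove it from scratch. Your computations check out. Integrating $-z\psi''(z)=z\int_0^\infty r^2e^{-zr}(1-e^{-r})^{-1}\,dr$ by parts (with vanishing boundary terms, as you note) does give $z\psi''(z)+\psi'(z)=-\int_0^\infty G(r)e^{-zr}\,dr$ with $G(r)=r\bigl[1-(1+r)e^{-r}\bigr](1-e^{-r})^{-2}\ge 0$; the integrand ratio $G(r)(1-e^{-r})/r=1-r/(e^r-1)$ is indeed increasing by your elementary argument; and the Chebyshev symmetrization is exactly the standard device showing that the $e^{-zr}$-weighted mean of an increasing function against a positive weight is decreasing in $z$ (the two bracketed factors change sign together across the diagonal $r=s$, so their product is nonnegative everywhere). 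Hence $R$ is decreasing, $T+1=-R$ is increasing, and Lemma \ref{lem:Condition-GG-Convex} applies. What your route buys is self-containedness: the needed monotonicity is derived using only the representation \eqref{eqn:Polygamma-Integral-Rep} and elementary calculus, rather than outsourced to Alzer's lemma; the cost is length. One presentational remark: you flag the Chebyshev step as the ``main obstacle,'' but as written it is already complete --- there is no remaining gap.
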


\begin{proof}
As a result of Lemma \ref{lem:Condition-GG-Convex}, it suffices to show that the function $z\frac{\psi''(z)}{\psi'(z)}$ is increasing on $(0,\infty)$ and this follows from Lemma 2 of \cite{Alzer-2017-RSMUPT}.
\end{proof}

\begin{corollary} \label{lem:Product-Ineq-Trigamma}
The inequality 
\begin{equation}\label{eqn:Product-Ineq-Trigamma}
\psi'(z) \psi'(1/z)  \geq \left(\frac{\pi^2}{6}\right)^2
\end{equation}
holds for $z\in(0,\infty)$ and with equality when $z=1$.
\end{corollary}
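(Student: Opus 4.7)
The plan is to obtain Corollary 2.2 as an immediate specialization of the GG-convexity in Theorem 2.1, by plugging in the geometric-mean point $z^{1-k}(1/z)^k$ at $k=1/2$.

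More precisely, I would first set $x=z$ and $y=1/z$ in \eqref{eqn:GG-Convex-Trigamma}, choose $k=1/2$, and observe that
\begin{equation*}
x^{1-k}y^{k}=z^{1/2}\bigl(1/z\bigr)^{1/2}=1,
\end{equation*}
so that the left-hand side of \eqref{eqn:GG-Convex-Trigamma} collapses to $\psi'(1)$. Using the series representation \eqref{eqn:Polygamma-Series-Rep} with $c=1$ and $z=1$, one has $\psi'(1)=\sum_{n=0}^{\infty}(n+1)^{-2}=\pi^{2}/6$.

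Next I would note that $\psi'$ is strictly positive on $(0,\infty)$ (visible from either \eqref{eqn:Polygamma-Integral-Rep} or \eqref{eqn:Polygamma-Series-Rep}), so that the right-hand side $[\psi'(z)]^{1/2}[\psi'(1/z)]^{1/2}$ is a well-defined positive real number and the inequality may be squared without reversing direction. This yields
\begin{equation*}
\Bigl(\tfrac{\pi^{2}}{6}\Bigr)^{2}=\bigl[\psi'(1)\bigr]^{2}\leq \psi'(z)\,\psi'(1/z),
\end{equation*}
which is \eqref{eqn:Product-Ineq-Trigamma}. Finally, substituting $z=1$ directly gives $\psi'(1)\psi'(1)=(\pi^{2}/6)^{2}$, confirming the equality case.

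There is no real obstacle here: the whole argument is a one-line consequence of Theorem 2.1 together with the evaluation $\psi'(1)=\pi^{2}/6$ and the positivity of $\psi'$. The only point worth mentioning is that the harmonic-mean statement promised in the abstract will follow from \eqref{eqn:Product-Ineq-Trigamma} once the companion inequality $\psi'(z)+\psi'(1/z)\geq \pi^{2}/3$ (which presumably comes from a GA-concavity argument via Lemma 1.4) is in hand; but that is the subject of a separate theorem, not of this corollary.
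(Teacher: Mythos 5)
Your proposal is correct and is essentially identical to the paper's own proof: both specialize the GG-convexity inequality \eqref{eqn:GG-Convex-Trigamma} at $x=z$, $y=1/z$, $k=1/2$ to get $\sqrt{\psi'(z)\psi'(1/z)}\geq\psi'(1)=\pi^2/6$ and then square. Your extra remarks on the positivity of $\psi'$ and the evaluation of $\psi'(1)$ are fine but add nothing beyond the paper's one-line argument.
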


\begin{proof}
By letting $x=z$, $y=1/z$ and $k=\frac{1}{2}$ in \eqref{eqn:GG-Convex-Trigamma}, we obtain
\begin{equation*}
\sqrt{\psi'(z) \psi'(1/z)}  \geq \psi'(1)=\frac{\pi^2}{6}
\end{equation*}
which gives the desired result.
\end{proof}

\begin{lemma}\label{lem:Special-Ineq}
For $r>0$, we have
\begin{equation}\label{eqn:Special-Ineq}
0 < \frac{re^{-r}}{1-e^{-r}} < 1.
\end{equation}
\end{lemma}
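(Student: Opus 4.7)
The plan is to rewrite the middle expression in a cleaner form and then reduce each inequality to a standard one-variable estimate. Multiplying numerator and denominator by $e^r$ gives
\begin{equation*}
\frac{re^{-r}}{1-e^{-r}} = \frac{r}{e^r-1},
\end{equation*}
which is the form I would work with throughout.

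For the left inequality, I would simply observe that for $r>0$ we have $r>0$ and $e^r>1$, so both numerator and denominator of $r/(e^r-1)$ are strictly positive; positivity follows immediately.

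For the right inequality, I would note that $\frac{r}{e^r-1}<1$ is equivalent (since $e^r-1>0$) to
\begin{equation*}
e^r > 1+r,
\end{equation*}
which is the classical exponential inequality. To prove it cleanly I would set $g(r)=e^r-1-r$, observe $g(0)=0$ and $g'(r)=e^r-1>0$ for $r>0$, so $g$ is strictly increasing on $(0,\infty)$ and hence $g(r)>0$ there. (Alternatively, the Taylor expansion $e^r = 1+r+\sum_{n\ge 2} r^n/n!$ yields $e^r>1+r$ termwise for $r>0$.)

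There is no real obstacle here; the lemma is essentially the statement that $r/(e^r-1)$ is a positive function bounded above by $1$ on $(0,\infty)$, and both bounds follow from the elementary estimate $e^r>1+r$. The only thing to be careful about is the direction of the inequalities after clearing the positive denominator, which is handled by the positivity established in the first step.
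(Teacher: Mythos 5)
Your proof is correct, but it takes a genuinely different route from the paper. The paper proves the lemma by establishing a stronger fact: it writes $\mathcal{B}(r)=\frac{re^{-r}}{1-e^{-r}}$ as a quotient $p_1(r)/q_1(r)$ with $p_1(0+)=q_1(0+)=0$, shows that $p_1'/q_1'=1-r$ is decreasing, invokes the L'Hospital-type monotonicity rule (Lemma \ref{lem:LMR}, due to Pinelis) to conclude that $\mathcal{B}$ is decreasing on $(0,\infty)$, and then reads off the bounds as the limits $\lim_{r\to\infty}\mathcal{B}(r)=0$ and $\lim_{r\to 0+}\mathcal{B}(r)=1$. You instead clear the exponential to get $r/(e^r-1)$ and reduce both inequalities directly to the elementary estimate $e^r>1+r$, which you verify by a one-line derivative argument (or termwise from the Taylor series). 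Your argument is shorter, entirely self-contained, and avoids the monotone-ratio machinery; the paper's argument buys the additional information that $\mathcal{B}$ is monotonically decreasing and that the bounds $0$ and $1$ are the sharp infimum and supremum, neither of which is needed for the lemma as stated or for its use later in the paper (only the two-sided bound is invoked in the proof of Theorem \ref{lem:GA-Convex-Trigamma}). Both proofs are complete and correct.
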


\begin{proof}
By direct computation, we obtain 
\begin{equation*}
\mathcal{B}(r) =\frac{re^{-r}}{1-e^{-r}} = \frac{p_1(r)}{q_1(r)} 
\end{equation*}
where $p_1(r)=re^{-r}$, $q_1(r)=1-e^{-r}$ and $p_1(0+)=q_1(0+)=0$. Then
\begin{equation*}
\frac{p'_1(r)}{q'_1(r)}=1-r
\end{equation*}
and then
\begin{equation*}
\left(\frac{p'_1(r)}{q'_1(r)}\right)'=-1<0.
\end{equation*}
Thus, $\frac{p'_1(r)}{q'_1(r)}$ is decreasing and as a result of Lemma \ref{lem:LMR}, the function $\mathcal{B}(r)$ is also decreasing. Hence
\begin{equation*}
0=\lim_{r\to \infty} \mathcal{B}(r)<\mathcal{B}(r)<\lim_{r\to0+} \mathcal{B}(r)=1
\end{equation*}
which completes the proof.
\end{proof}

\begin{theorem} \label{lem:GA-Convex-Trigamma}
The function $\psi'(z)$ is GA-convex on $(0,\infty)$. In other words,
\begin{equation}\label{eqn:GA-Convex-Trigamma}
\psi'(x^{1-k}y^{k}) \leq (1-k)\psi'(x) + k \psi'(y)
\end{equation}
is satisfied for $x>0$, $y>0$ and $k\in[0,1]$.
\end{theorem}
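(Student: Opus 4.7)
The plan is to invoke Lemma \ref{lem:Condition-GA-Convex} with $H=\psi'$, which reduces the claim to the pointwise inequality
$$\psi''(z) + z\psi'''(z) \geq 0, \qquad z>0.$$
Since the left-hand side is exactly $\frac{d}{dz}\bigl[z\psi''(z)\bigr]$, it suffices to show that the map $z\mapsto z\psi''(z)$ is nondecreasing on $(0,\infty)$.

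Starting from the integral representation \eqref{eqn:Polygamma-Integral-Rep} with $c=2$, I would write
$$z\psi''(z) = -\int_0^\infty \frac{r^2}{1-e^{-r}}\cdot ze^{-zr}\,dr$$
and integrate by parts in $r$, using $ze^{-zr} = -\partial_r e^{-zr}$. The boundary terms vanish because $r^2/(1-e^{-r})\to 0$ as $r\to 0^+$ and because $e^{-zr}$ decays exponentially at infinity, leaving
$$z\psi''(z) = -\int_0^\infty \frac{r\bigl[2(1-e^{-r}) - re^{-r}\bigr]}{(1-e^{-r})^2}\,e^{-zr}\,dr.$$
Differentiating under the integral sign in $z$ then yields
$$\frac{d}{dz}\bigl[z\psi''(z)\bigr] = \int_0^\infty \frac{r^2\bigl[2(1-e^{-r}) - re^{-r}\bigr]}{(1-e^{-r})^2}\,e^{-zr}\,dr.$$

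The proof is finished by applying Lemma \ref{lem:Special-Ineq}: since $re^{-r}/(1-e^{-r}) < 1 < 2$ for every $r>0$, the bracket $2(1-e^{-r}) - re^{-r}$ is strictly positive, so the integrand is nonnegative and $\frac{d}{dz}[z\psi''(z)]>0$, as required.

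I expect the main obstacle to be more presentational than conceptual: the direct identity
$$\psi''(z) + z\psi'''(z) = \int_0^\infty \frac{r^2(zr-1)e^{-zr}}{1-e^{-r}}\,dr$$
has an integrand that changes sign at $r=1/z$, so no pointwise comparison works from that form. The trick is to recast the expression as $\frac{d}{dz}[z\psi''(z)]$ and use integration by parts in $r$ to absorb the awkward factor $z$ into the kernel; this is precisely where Lemma \ref{lem:Special-Ineq}, which controls the ratio $re^{-r}/(1-e^{-r})$, fits in, and explains the placement of that lemma just before the theorem.
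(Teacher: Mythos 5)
Your proof is correct and takes essentially the same route as the paper: both reduce via Lemma \ref{lem:Condition-GA-Convex} to the positivity of $\psi''(z)+z\psi'''(z)$, represent it as a Laplace transform of a nonnegative kernel, and close with Lemma \ref{lem:Special-Ineq}. The only cosmetic difference is that the paper handles the awkward factor of $z$ by applying the convolution theorem to $\tfrac{1}{z}\psi''(z)$ rather than integrating $z\psi''(z)$ by parts, and the two computations produce literally the same kernel $\frac{r^2}{1-e^{-r}}\bigl[2-\frac{re^{-r}}{1-e^{-r}}\bigr]$ (the paper's $\mathcal{A}'(r)$).
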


\begin{proof}
As a result of Lemma \ref{lem:Condition-GA-Convex}, it suffices to show that 
\begin{equation}\label{eqn:Spec-Ineq-A}
\phi(z)=\psi''(z) + z \psi'''(z) \geq 0
\end{equation}
for $z\in(0,\infty)$.  By applying \eqref{eqn:Polygamma-Integral-Rep}, \eqref{eqn:integral-rep-vip} and Lemma \ref{lem:Lap-Tran-of-Conv}, we obtain
\begin{align*}
\frac{\phi(z)}{z}&=\frac{1}{z}\psi''(z) +  \psi'''(z) \\
&= - \int_{0}^{\infty} e^{-zr} dr  \int_{0}^{\infty} \frac{r^{2}e^{-zr}}{1-e^{-r}}dr + \int_{0}^{\infty} \frac{r^{3}e^{-zr}}{1-e^{-r}}dr  \\
&= - \int_{0}^{\infty}\left[ \int_{0}^{r} \frac{s^{2}}{1-e^{-s}}ds \right] e^{-zr} dr   + \int_{0}^{\infty} \frac{r^{3}e^{-zr}}{1-e^{-r}}dr  \\
&=\int_{0}^{\infty} \mathcal{A}(r) e^{-zr} dr 
\end{align*}
where
\begin{equation*}
\mathcal{A}(r)=\frac{r^3}{1-e^{-r}} - \int_{0}^{r} \frac{s^{2}}{1-e^{-s}}ds.
\end{equation*}
Then by direct computations and as  a result of \eqref{eqn:Special-Ineq}, we have
\begin{align*}
\mathcal{A}'(r)&=\frac{3r^2}{1-e^{-r}} - \frac{r^3e^{-r}}{(1-e^{-r})^2} - \frac{r^2}{1-e^{-r}} \\
&=\frac{r^2}{1-e^{-r}}\left[ 2- \frac{re^{-r}}{1-e^{-r}}\right] \geq 0.
\end{align*}
Hence $\mathcal{A}(r)$ is increasing and this implies that
\begin{equation*}
\mathcal{A}(r) \geq \lim_{r\to 0+}\mathcal{A}(r)=0.
\end{equation*}
Therefore, $\phi(z)\geq0$ which completes the proof.
\end{proof}

\begin{remark}
Inequality \eqref{eqn:Spec-Ineq-A} implies that the function $z\psi''(z)$ is increasing.
\end{remark}

\begin{corollary} \label{lem:Sum-Ineq-Trigamma}
The inequality 
\begin{equation}\label{eqn:Sum-Ineq-Trigamma}
\psi'(z) + \psi'(1/z)  \geq \frac{\pi^2}{3}
\end{equation}
holds for $z\in(0,\infty)$ and with equality when $z=1$.
\end{corollary}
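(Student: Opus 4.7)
The plan is to derive this corollary as an immediate consequence of the GA-convexity of $\psi'$ established in Theorem \ref{lem:GA-Convex-Trigamma}, in exact parallel with how Corollary \ref{lem:Product-Ineq-Trigamma} was deduced from the GG-convexity in Theorem \ref{lem:GG-Convex-Trigamma}.

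Concretely, in the inequality
\begin{equation*}
\psi'(x^{1-k}y^{k}) \leq (1-k)\psi'(x) + k\psi'(y)
\end{equation*}
supplied by Theorem \ref{lem:GA-Convex-Trigamma}, I will substitute $x = z$, $y = 1/z$ and $k = 1/2$. The left-hand side then collapses to $\psi'(z^{1/2}\cdot z^{-1/2}) = \psi'(1) = \pi^2/6$, while the right-hand side becomes $\tfrac{1}{2}\bigl(\psi'(z) + \psi'(1/z)\bigr)$. Multiplying through by $2$ yields the stated inequality, and the equality case $z=1$ is visible because both $\psi'(z)$ and $\psi'(1/z)$ reduce to $\psi'(1) = \pi^2/6$ there.

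There is essentially no obstacle here: the only conceptual work has already been discharged in proving the GA-convexity of $\psi'$ (which in turn relied on the Laplace-transform/convolution computation for $\psi''(z) + z\psi'''(z) \geq 0$ together with Lemma \ref{lem:Special-Ineq}). The corollary itself is just the specialization of GA-convexity at the geometric-mean point $\sqrt{z\cdot(1/z)} = 1$, analogous to how Gautschi-type harmonic/arithmetic mean inequalities typically fall out of convexity properties once those properties are in hand.
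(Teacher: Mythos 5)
Your proof is correct and is exactly the paper's argument: both substitute $x=z$, $y=1/z$, $k=\tfrac{1}{2}$ into the GA-convexity inequality of Theorem \ref{lem:GA-Convex-Trigamma}, use $\psi'(1)=\pi^2/6$, and multiply by $2$. Nothing further is needed.
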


\begin{proof}
By letting $x=z$, $y=1/z$ and $k=\frac{1}{2}$ in \eqref{eqn:GA-Convex-Trigamma}, we obtain
\begin{equation*}
\frac{\psi'(z)}{2} + \frac{\psi'(1/z)}{2}  \geq \psi'(1)=\frac{\pi^2}{6}
\end{equation*}
which gives the desired result.
\end{proof}


\begin{lemma}[\cite{Alzer-Wells-1998-SIAMJMA}]\label{lem:Special-Ineq-2}
For $z>0$, the inequality
\begin{equation}\label{eqn:Special-Ineq-2}
\psi'(z)\psi'''(z)-2\left[ \psi''(z) \right]^2 \leq0
\end{equation}
is satisfied.
\end{lemma}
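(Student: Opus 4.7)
The inequality $\psi'(z)\psi'''(z)-2[\psi''(z)]^2 \leq 0$ is equivalent to the convexity of $1/\psi'$ on $(0,\infty)$, because direct differentiation yields
\[
\left(\frac{1}{\psi'(z)}\right)'' = \frac{2[\psi''(z)]^2-\psi'(z)\psi'''(z)}{[\psi'(z)]^3}.
\]
My plan is to use \eqref{eqn:Polygamma-Integral-Rep} to view $\psi'(z)$, $-\psi''(z)$, and $\psi'''(z)$ as the zeroth, first, and second moments of the positive density
\[
f_{z}(r) := \frac{r e^{-zr}}{1-e^{-r}}, \qquad r>0,
\]
and to deduce the target inequality $\mu_0\mu_2 \leq 2\mu_1^2$ from a classical moment bound for log-concave densities on $[0,\infty)$.

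The first technical step is to verify that $f_z$ is log-concave in $r$ for each fixed $z>0$. A direct computation gives
\[
(\ln f_z)''(r) = -\frac{1}{r^2} + \frac{e^r}{(e^r-1)^2},
\]
and this is nonpositive iff $(e^r-1)^2 \geq r^2 e^r$, equivalently $2\sinh(r/2) \geq r$, which follows instantly from $\sinh(x)\geq x$ on $[0,\infty)$. Observe that $z$ enters $\ln f_z$ only through the affine term $-zr$, so log-concavity holds uniformly in $z>0$, with a proof in the same Pinelis/L'Hospital-monotonicity spirit as Lemmas~\ref{lem:LMR} and~\ref{lem:Special-Ineq}.

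The second step is the classical moment inequality: every nonnegative log-concave function $f$ on $[0,\infty)$ with finite second moment satisfies $\mu_0\mu_2 \leq 2\mu_1^2$, with equality only for exponential densities. One self-contained route goes through reliability theory: log-concavity of $f$ passes to the survival function $\bar F(t)=\int_t^\infty f$, so the hazard rate $f/\bar F$ is non-decreasing (the IFR property), and IFR distributions on $[0,\infty)$ satisfy $E[X^2] \leq 2(EX)^2$, i.e., coefficient of variation at most $1$. Applied to $f_z$ this yields $\psi'(z)\,\psi'''(z) \leq 2[-\psi''(z)]^2 = 2[\psi''(z)]^2$, as required. I expect the moment bound itself to be the main obstacle; if one prefers to avoid invoking external results, it can be proved in place by a three-sign-change argument comparing $f_z$ against the exponential density of matching mean, which identifies the exponential as the unique extremizer of $\mu_2$ given $\mu_0,\mu_1$.
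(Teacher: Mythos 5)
Your argument is sound, but note that the paper does not actually prove this lemma: it is imported verbatim as a citation to Alzer and Wells, so any self-contained argument is by construction a different route. Your chain checks out. With $f_z(r)=re^{-zr}/(1-e^{-r})$, the representation \eqref{eqn:Polygamma-Integral-Rep} does give $\mu_0=\psi'(z)$, $\mu_1=-\psi''(z)$, $\mu_2=\psi'''(z)$; the computation $(\ln f_z)''(r)=-\tfrac{1}{r^2}+\tfrac{e^{r}}{(e^{r}-1)^2}\le 0$ reduces correctly to $2\sinh(r/2)\ge r$; and since the target $\mu_0\mu_2\le 2\mu_1^2$ is homogeneous of degree two in $f_z$, the lack of normalization is harmless. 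The reliability-theory chain (log-concave density $\Rightarrow$ log-concave survival function $\Rightarrow$ increasing hazard rate $\Rightarrow$ NBUE $\Rightarrow$ $E[X^2]\le 2(EX)^2$) is classical (Barlow--Proschan), and the final implication has a two-line proof: integrating the NBUE inequality $\int_t^\infty\bar F(x)\,dx\le\mu\bar F(t)$ over $t\in[0,\infty)$ yields $\tfrac12 E[X^2]\le\mu^2$. What your approach buys is a structural explanation that the citation hides: the trigamma function is the Laplace transform of a log-concave weight, and the constant $2$ is exactly the value attained by the exponential-density extremal case. The cost is that you must either cite or re-prove the coefficient-of-variation bound; as written, that step is the one place where your proof leans on an external result of comparable weight to the one being proved, so for full rigor you should carry out the sign-change comparison against the exponential of matching mean that you sketch, or give an explicit reference. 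One minor correction: your log-concavity step is a direct computation plus $\sinh x\ge x$, and is not really in the spirit of the L'Hospital-type monotonicity rule of Lemma \ref{lem:LMR}.
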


\begin{lemma}\label{lem:Decreasing-Property}
For $z>0$, the function
\begin{equation}\label{eqn:Decreasing-Property}
F(z)=\frac{z \psi''(z)}{[\psi'(z)]^2} 
\end{equation}
is decreasing.
\end{lemma}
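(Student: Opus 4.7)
The plan is to show $F'(z) \leq 0$ by direct differentiation, and then identify each term in the numerator as non-positive, invoking Lemma \ref{lem:Special-Ineq-2} for the key step.

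First I would apply the quotient rule to
$$F(z) = \frac{z\,\psi''(z)}{[\psi'(z)]^{2}}.$$
The derivative of the numerator is $\psi''(z) + z\,\psi'''(z)$, and the derivative of the denominator is $2\psi'(z)\psi''(z)$. After simplification (cancelling one factor of $\psi'(z)$), I expect to obtain
$$F'(z) = \frac{\psi'(z)\psi''(z) + z\bigl\{\psi'(z)\psi'''(z) - 2[\psi''(z)]^{2}\bigr\}}{[\psi'(z)]^{3}}.$$

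Next I would check the signs. From the series representation \eqref{eqn:Polygamma-Series-Rep}, one has $\psi'(z) > 0$ and $\psi''(z) < 0$ on $(0,\infty)$, so the denominator $[\psi'(z)]^{3}$ is positive while the first term $\psi'(z)\psi''(z)$ in the numerator is negative. The second term in the numerator contains the bracket $\psi'(z)\psi'''(z) - 2[\psi''(z)]^{2}$, which is exactly the quantity shown to be non-positive by Lemma \ref{lem:Special-Ineq-2}, and it is multiplied by the positive factor $z$. Thus the whole numerator is a sum of two non-positive quantities, giving $F'(z) \leq 0$, and $F$ is decreasing as claimed.

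There is no real obstacle here; the only subtlety is recognizing that the shape of the numerator after differentiation is tailor-made for Lemma \ref{lem:Special-Ineq-2}, so it is natural to group the terms as above rather than expanding everything. The signs of $\psi'$ and $\psi''$ are immediate from \eqref{eqn:Polygamma-Series-Rep}, so no further estimates are needed.
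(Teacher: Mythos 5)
Your computation and grouping of the numerator are exactly what the paper does: it writes $[\psi'(z)]^3F'(z)=\psi'(z)\psi''(z)+z\bigl[\psi'(z)\psi'''(z)-2[\psi''(z)]^2\bigr]$ and concludes negativity from Lemma \ref{lem:Special-Ineq-2} together with the signs of $\psi'$ and $\psi''$. Your proposal is correct and follows the same route, only spelling out the sign of $\psi'(z)\psi''(z)$ a bit more explicitly than the paper does.
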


\begin{proof}
By applying Lemma \ref{lem:Special-Ineq-2}, we obtain
\begin{align*}
[\psi'(z)]^3F'(z)&=\psi'(z)\psi''(z) + z\psi'(z)\psi'''(z) -2z[\psi''(z)]^2\\
&=\psi'(z)\psi''(z) + z \left[ \psi'(z)\psi'''(z)-2\left[ \psi''(z) \right]^2\right]\\
&<0.
\end{align*}
Hence $F'(z)<0$ which completes the proof.
\end{proof}

\begin{theorem}\label{thm:HMI-Shi}
For $z>0$, the inequality
\begin{equation}\label{eqn:HMI-Trigamma}
\frac{2\psi'(z) \psi'(1/z)}{\psi'(z) + \psi'(1/z)} \leq \frac{\pi^2}{6}
\end{equation}
holds and equality is attained if $z=1$.
\end{theorem}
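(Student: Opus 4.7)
The plan is to reduce inequality \eqref{eqn:HMI-Trigamma} to a GA-convexity statement about the reciprocal function $H(z):=1/\psi'(z)$, in complete parallel with the way Corollary \ref{lem:Sum-Ineq-Trigamma} is deduced from Theorem \ref{lem:GA-Convex-Trigamma}. Writing the harmonic mean as $2/[H(z)+H(1/z)]$, the target inequality \eqref{eqn:HMI-Trigamma} is equivalent to $H(z)+H(1/z)\ge 2H(1)$, which is precisely what specializing the GA-convexity inequality for $H$ at $x=z$, $y=1/z$, $k=\tfrac12$ would deliver.

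So the real task is to verify the GA-convexity criterion $H'(z)+zH''(z)\ge 0$ of Lemma \ref{lem:Condition-GA-Convex} for $H=1/\psi'$. The crucial observation is that this criterion is nothing but the monotonicity statement of Lemma \ref{lem:Decreasing-Property} in disguise. Indeed, a direct differentiation gives $H'(z)=-\psi''(z)/[\psi'(z)]^2$, so $-zH'(z)=z\psi''(z)/[\psi'(z)]^2=F(z)$ with $F$ as in Lemma \ref{lem:Decreasing-Property}. Differentiating this identity yields $F'(z)=-H'(z)-zH''(z)$, so Lemma \ref{lem:Decreasing-Property} (which asserts $F'(z)<0$) is equivalent to $H'(z)+zH''(z)>0$ on $(0,\infty)$. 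This establishes the GA-convexity of $H$.

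To finish, I would apply the GA-convexity inequality for $H$ with $x=z$, $y=1/z$, $k=\tfrac12$, obtaining $H(1)\le\tfrac12[H(z)+H(1/z)]$, i.e.\ $1/\psi'(z)+1/\psi'(1/z)\ge 2/\psi'(1)=12/\pi^2$, and then take reciprocals to recover \eqref{eqn:HMI-Trigamma}. Equality at $z=1$ is immediate. Since all of the genuine analytic content is already packaged into Lemmas \ref{lem:Special-Ineq-2} and \ref{lem:Decreasing-Property}, there is no real obstacle here; the only thing to watch carefully is the short chain of algebraic identities linking $F$ to the derivatives of $H$, and in particular the sign bookkeeping in the identification $F(z)=-zH'(z)$.
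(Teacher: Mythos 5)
Your proposal is correct, and it reaches the conclusion by a genuinely different (though closely related) route. The paper works directly with the harmonic-mean function $\mathcal{K}(z)=2\psi'(z)\psi'(1/z)/[\psi'(z)+\psi'(1/z)]$: it computes the logarithmic derivative $\beta'(z)$, shows that $z[1/\psi'(z)+1/\psi'(1/z)]\beta'(z)=F(z)-F(1/z)$ with $F$ as in Lemma \ref{lem:Decreasing-Property}, and concludes that $\mathcal{K}$ is increasing on $(0,1)$ and decreasing on $(1,\infty)$ with maximum $\psi'(1)=\pi^2/6$ at $z=1$. You instead recast the inequality as $H(z)+H(1/z)\ge 2H(1)$ for $H=1/\psi'$ and prove GA-convexity of $H$ via Lemma \ref{lem:Condition-GA-Convex}; your identity $F(z)=-zH'(z)$ is correct, so $F'<0$ is literally the criterion $H'(z)+zH''(z)>0$, and the specialization $x=z$, $y=1/z$, $k=\tfrac12$ finishes the argument. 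Both proofs rest on exactly the same analytic kernel, namely Lemma \ref{lem:Decreasing-Property} (hence the Alzer--Wells inequality of Lemma \ref{lem:Special-Ineq-2}); what differs is the packaging. Your version is shorter and runs in perfect parallel with how Corollaries \ref{lem:Product-Ineq-Trigamma} and \ref{lem:Sum-Ineq-Trigamma} are deduced from the GG- and GA-convexity theorems, and it yields the extra dividend that $1/\psi'$ is GA-convex. The paper's version buys the additional piecewise monotonicity information about $\mathcal{K}$ itself, which is strictly stronger than the bound alone. The only small points to make explicit in a write-up are the positivity of $\psi'$ (needed both to form $H$ and to reverse the inequality when taking reciprocals) and the strictness bookkeeping, neither of which causes any difficulty.
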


\begin{proof}
The case for $z=1$ is apparent. For this reason, we only prove the case for $z\in(0,1)\cup(1,\infty)$.  Let 
\begin{equation*}
\mathcal{K}(z)=\frac{2\psi'(z) \psi'(1/z)}{\psi'(z) + \psi'(1/z)} \quad \text{and} \quad \beta(z)=\ln \mathcal{K}(z)
\end{equation*}
for $z\in(0,1)\cup(1,\infty)$. Then direct calculations gives
\begin{equation*}
\beta'(z)=\frac{\psi''(z)}{\psi'(z)} - \frac{1}{z^2}\frac{\psi''(1/z)}{\psi'(1/z)} - \frac{\psi''(z)-\frac{1}{z^2}\psi''(1/z)}{\psi'(z) + \psi'(1/z)}
\end{equation*}
which implies that
\begin{equation*}
z\left[ \psi'(z) + \psi'(1/z) \right]\beta'(z)=z\frac{\psi''(z)}{\psi'(z)}\psi'(1/z)  - \frac{1}{z}\frac{\psi''(1/z)}{\psi'(1/z)} \psi'(z).
\end{equation*}
This further gives rise to
\begin{align*}
z\left[ \frac{1}{\psi'(z) }+ \frac{1}{\psi'(1/z)} \right]\beta'(z)
&=z\frac{\psi''(z)}{[\psi'(z)]^2} - \frac{1}{z}\frac{\psi''(1/z)}{[\psi'(1/z)]^2} \\
&:=T(z).
\end{align*}
As a result of Lemma \ref{lem:Decreasing-Property}, we conclude that $T(z)>0$ if $z\in(0,1)$ and $T(z)<0$ if $z\in(1,\infty)$. Thus, $\beta(z)$ is increasing on $(0,1)$ and decreasing on $(1,\infty)$. Accordingly, $\mathcal{K}(z)$ is increasing on $(0,1)$ and decreasing on $(1,\infty)$. Therefore, on both intervals, we have
\begin{equation*}
\mathcal{K}(z)<\lim_{z\to1}\mathcal{K}(z)=\psi'(1)=\frac{\pi^2}{6}
\end{equation*}
completing the proof.
\end{proof}

\bibliographystyle{plain}


\end{document}